\newtheorem{thm}{Theorem}
\newtheorem{cor}[thm]{Corollary}
\newenvironment{proof}{\begin{trivlist}
                       \item[]{\bf Proof.}
                       \hspace{0cm}}{\hfill $\Box$
                       \end{trivlist}}
\begin{document}
\title{A nonlinear inequality}

\author{N. S. Hoang$\dag$\footnotemark[1]\quad A. G. Ramm$\dag$\footnotemark[3]
\\
\\
$\dag$Mathematics Department, Kansas State University,\\
Manhattan, KS 66506-2602, USA
}

\renewcommand{\thefootnote}{\fnsymbol{footnote}}
\footnotetext[1]{Email: nguyenhs@math.ksu.edu}
\footnotetext[3]{Corresponding author. Email: ramm@math.ksu.edu}
\date{}
\maketitle

\begin{abstract} \noindent 
A quadratic inequality is formulated in the paper.
An estimate on the rate of decay of solutions to this inequality is obtained.
This inequality is of interest in a study of dynamical systems and nonlinear evolution equations.

{\bf Keywords.}
Nonlinear inequality, Dynamical Systems Method, stability.

{\bf MSC:}
65J15, 65J20, 65N12, 65R30, 47J25, 47J35.
\end{abstract}

\section{Introduction}

In \cite{R499} the following differential inequality 
\begin{equation}
\label{neq1}
\dot{g}(t) \le -\gamma(t)g(t) + \alpha(t)g^2(t) + \beta(t),\qquad t\ge t_0,
\end{equation}
was studied and applied to various evolution problems. In \eqref{neq1} $\alpha(t),\beta(t),\gamma(t)$ and $g(t)$ are continuous nonnegative functions on $[t_0,\infty)$ where $t_0\ge 0$
is a fixed number. In \cite{R499}, an upper bound for $g(t)$ is obtained under some conditions on
$\alpha,\beta,\gamma$:
\begin{thm}[\cite{R499} p. 97]
\label{thm1r}
If there exists a monotonically growing function $\mu(t)$,
$$
\mu\in C^1[t_0,\infty),\quad \mu>0, \quad \lim_{t\to\infty} \mu(t)=\infty,
$$
such that
\begin{align}
\label{1eq4}
0\le \alpha(t)&\le \frac{\mu}{2}\bigg{[}\gamma -\frac{\dot{\mu}(t)}{\mu(t)}\bigg{]},
\qquad \dot{u}:=\frac{du}{dt},\\
\label{1eq5}
\beta(t)      &\le \frac{1}{2\mu}\bigg{[}\gamma -\frac{\dot{\mu}(t)}{\mu(t)}\bigg{]},\\
\label{1eq6}
\mu(0)g(0)    &< 1,
\end{align}
where $\alpha(t),\beta(t),\gamma(t)$ and $g(t)$ are continuous nonnegative functions on $[t_0,\infty)$,  $t_0\ge 0$,
 and $g(t)$ satisfies \eqref{neq1}, then one has the estimate:
\begin{equation}
\label{3eq10}
0\le g(t) < \frac{1}{\mu(t)},\qquad \forall t\ge t_0.
\end{equation}
If inequalities \eqref{1eq4}--\eqref{1eq6} hold on an interval $[t_0,T)$, then
$g(t)$ exists on this interval and inequality \eqref{3eq10} holds on $[t_0,T)$.
\end{thm}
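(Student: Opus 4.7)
The natural substitution is $\phi(t):=\mu(t)g(t)$; hypothesis \eqref{1eq6} becomes $\phi(t_0)<1$, and the conclusion \eqref{3eq10} is exactly $\phi(t)<1$ on $[t_0,\infty)$. So the strategy is to derive a closed differential inequality for $\phi$ and integrate it.

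First I would compute $\dot\phi=\dot\mu\, g+\mu\dot g$, use \eqref{neq1}, and factor out the quantity $r(t):=\gamma(t)-\dot\mu(t)/\mu(t)$, which is nonnegative by \eqref{1eq4} together with $\alpha\ge 0$, $\mu>0$. After collecting terms and replacing $\mu g$ by $\phi$,
\begin{equation*}
\dot\phi \;\le\; -r\phi+\frac{\alpha}{\mu}\phi^{2}+\mu\beta.
\end{equation*}
The key algebraic observation is that assumptions \eqref{1eq4} and \eqref{1eq5} are precisely $\alpha/\mu\le r/2$ and $\mu\beta\le r/2$, so the right--hand side is bounded by
\begin{equation*}
-r\phi+\frac{r}{2}\phi^{2}+\frac{r}{2}\;=\;\frac{r}{2}(\phi-1)^{2}.
\end{equation*}
Thus $\dot\phi\le\tfrac{r}{2}(\phi-1)^{2}$, a self--contained inequality with nonnegative coefficient.

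Second, I would integrate this inequality on the maximal interval $[t_0,T)$ on which $\phi<1$; such an interval exists by \eqref{1eq6} and continuity. Setting $\eta:=1-\phi>0$ on $[t_0,T)$, the inequality becomes $\dot\eta\ge-\tfrac{r}{2}\eta^{2}$, which is equivalent to $\frac{d}{dt}\!\left(\eta^{-1}\right)\le r/2$. Integrating from $t_0$ to any $t<T$,
\begin{equation*}
\frac{1}{1-\phi(t)}\;\le\;\frac{1}{1-\phi(t_0)}+\frac{1}{2}\int_{t_0}^{t}r(s)\,ds,
\end{equation*}
so $\phi(t)$ stays strictly below $1$ and, in particular, cannot reach $1$ at $t=T$ by continuity. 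Hence the maximal interval is $[t_0,\infty)$ (or $[t_0,T)$ in the local version of the statement), and $\phi(t)<1$ throughout, which is \eqref{3eq10}. Existence of $g$ on $[t_0,T)$ in the local version follows from the same bound, since $g\le 1/\mu$ prevents finite--time blow--up of the quadratic right--hand side of \eqref{neq1}.

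The only delicate point is the continuation/maximality argument: one must rule out the possibility that $\phi$ touches $1$ tangentially at some finite $T$. This is handled cleanly by the explicit upper bound on $1/(1-\phi)$ derived above, which gives a uniform lower bound on $1-\phi$ on each compact subinterval and therefore prevents $\phi$ from reaching $1$. I expect this continuation step, rather than the algebraic manipulation leading to $\dot\phi\le\tfrac{r}{2}(\phi-1)^{2}$, to be the main technical care point in writing the argument rigorously.
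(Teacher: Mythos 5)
Your argument is correct, but note that the paper itself gives no proof of Theorem \ref{thm1r}: the statement is imported from \cite{R499} and serves only as motivation, the sole proof in the paper being that of its discrete analogue, Theorem \ref{lem1}, which proceeds by induction on $n$. So the honest comparison is with that induction. Your substitution $\phi=\mu g$ turns \eqref{1eq4}--\eqref{1eq5} into $\alpha/\mu\le r/2$ and $\mu\beta\le r/2$ with $r=\gamma-\dot\mu/\mu\ge 0$, and the resulting perfect square $\dot\phi\le \tfrac r2(\phi-1)^2$ is exactly the continuous counterpart of the cancellation in the paper's induction step, where the $\alpha_m$- and $\beta_m$-contributions combine with $-h_m\gamma_m g_m$ to leave $\tfrac{1}{\mu_m}-\tfrac{\mu_{m+1}-\mu_m}{\mu_m^2}\le\tfrac{1}{\mu_{m+1}}$. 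What your route buys beyond a bare barrier/first-crossing argument is the explicit monotone bound $\tfrac{d}{dt}(1-\phi)^{-1}\le r/2$, which yields a uniform positive lower bound for $1-\phi$ on compact subintervals and thereby cleanly excludes the tangential-touching scenario (the naive argument only gives $\tfrac{d}{dt}(g-1/\mu)\le 0$ at a first crossing, which is not by itself a contradiction); this also explains why the conclusion is the strict inequality $g<1/\mu$ even though the discrete version only asserts $g_n\le 1/\mu_n$. Two minor points: hypothesis \eqref{1eq6} must be read as $\mu(t_0)g(t_0)<1$ (you do this implicitly), and the clause about existence of $g$ on $[t_0,T)$ only has content when $g$ is the norm of a solution of an underlying evolution equation, so your one-line remark that the a priori bound $g\le 1/\mu$ rules out finite-time blow-up is the appropriate level of detail there.
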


This result allows one to estimate the rate of decay of $g(t)$ when 
$t\to\infty$. In \cite{R544} an operator equation $F(u)=f$ was studied by the
Dynamical Systems Method (DSM), which consists of a study of an evolution equation whose
solution converges as $t\to\infty$ to a solution of equation $F(u)=f$. Theorem \ref{thm1r}
has been used in the above study.

In this paper we consider a discrete analog of Theorem \ref{thm1r}.
We study the following inequality: 
$$
\frac{g_{n+1}-g_n}{h_n}\le -\gamma_n g_n+\alpha_n g_n^2 +\beta_n,\qquad h_n > 0,\quad 0< h_n\gamma_n < 1,
$$
or an equivalent inequality:
$$
g_{n+1} \le (1-\gamma_n) g_n + \alpha_n g_n^2 + \beta_n,\quad n\ge 0,
\qquad 0<\gamma_n<1,
$$
where $g_n, \beta_n, \gamma_n$ and $\alpha_n$ are positive sequences.
Under suitable conditions on $\alpha_n, \beta_n$ and $\gamma_n$, 
we obtain an upper bound for $g_n$ as $n\to\infty$.
This result can be used in a study of evolution problems.

In Section 2, the main result, namely, Theorem~\ref{lem1} is formulated and proved. In Section 3, an application of Theorem~\ref{lem1} is presented.

\section{Results}

\begin{thm}
\label{lem1}
Let $\alpha_n,\beta_n,\gamma_n$ and $g_n$ be nonnegative sequences satisfying the inequality:
\begin{equation}
\label{eq1}
\begin{split}
\frac{g_{n+1}-g_n}{h_n}&\le -\gamma_n g_n+\alpha_n g_n^2 +\beta_n,\qquad h_n > 0,\quad 0< h_n\gamma_n < 1,
\end{split}
\end{equation}
or equivalently
\begin{equation}
\qquad g_{n+1}\le g_n(1-h_n\gamma_n) +\alpha_n h_n g_n^2+h_n\beta_n,\qquad h_n > 0,\quad 0< h_n\gamma_n < 1.
\end{equation}
If there is a monotonically growing sequence $(\mu_n)_{n=1}^\infty>0$ such that the following conditions hold:
\begin{align}
\label{eq2}
g_0&\le\frac{1}{\mu_0},\\
\label{eq3}
\alpha_n&\le\frac{\mu_n}{2}\bigg{(}\gamma_n -\frac{\mu_{n+1}-\mu_n}{\mu_n h_n}\bigg{)},\\
\label{eq4}
\beta_n&\le\frac{1}{2\mu_n}\bigg{(}\gamma_n -\frac{\mu_{n+1}-\mu_n}{\mu_n h_n}\bigg{)},
\end{align}
then
\begin{equation}
\label{eq5}
g_n\le\frac{1}{\mu_n} \quad \forall n\ge 0.
\end{equation}
Therefore, if $\lim_{n\to\infty}\mu_n =\infty$ then $\lim_{n\to\infty} g_n = 0$.
\end{thm}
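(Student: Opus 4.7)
The plan is to prove $g_n \le 1/\mu_n$ by induction on $n$, closely mirroring the continuous proof of Theorem~\ref{thm1r}. The base case $n=0$ is exactly hypothesis \eqref{eq2}, so all the work goes into the inductive step: assuming $g_n \le 1/\mu_n$, derive $g_{n+1}\le 1/\mu_{n+1}$.

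For the inductive step, I would start from the equivalent form
\[
g_{n+1} \le g_n(1-h_n\gamma_n) + \alpha_n h_n g_n^2 + h_n\beta_n,
\]
and use $g_n \le 1/\mu_n$ (which, combined with $0<h_n\gamma_n<1$, also guarantees that the coefficient $1-h_n\gamma_n$ is nonnegative so the monotonicity in $g_n$ is legitimate) to bound the right-hand side by
\[
\frac{1-h_n\gamma_n}{\mu_n} + \frac{\alpha_n h_n}{\mu_n^2} + h_n\beta_n.
\]
Now I substitute the upper bounds \eqref{eq3} and \eqref{eq4} for $\alpha_n$ and $\beta_n$. The two resulting terms are designed to combine neatly: each contributes $\tfrac12$ of the same quantity $\frac{h_n}{\mu_n}\bigl(\gamma_n-\frac{\mu_{n+1}-\mu_n}{\mu_n h_n}\bigr)$, so after adding them the $h_n\gamma_n/\mu_n$ pieces cancel against $-h_n\gamma_n/\mu_n$ from the first term, and the bound simplifies to
\[
g_{n+1} \le \frac{1}{\mu_n} - \frac{\mu_{n+1}-\mu_n}{\mu_n^2}.
\]

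It then remains to verify the purely algebraic inequality
\[
\frac{1}{\mu_n} - \frac{\mu_{n+1}-\mu_n}{\mu_n^2} \le \frac{1}{\mu_{n+1}},
\]
which, after clearing denominators (using $\mu_n,\mu_{n+1}>0$), reduces to $(\mu_{n+1}-\mu_n)^2 \ge 0$. This is what closes the induction. The final conclusion $\lim_{n\to\infty} g_n=0$ whenever $\mu_n\to\infty$ is then immediate from \eqref{eq5}.

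I do not anticipate a genuine obstacle: monotonicity $\mu_{n+1}\ge\mu_n$ is used only to guarantee that the right-hand side of \eqref{eq3}–\eqref{eq4} is nonnegative (so the hypotheses are consistent), while the key cancellation is structural and the final step is a perfect square. The only subtle point to flag in writing up is to note explicitly that $1-h_n\gamma_n\ge 0$ is needed to apply the inductive bound to the term $g_n(1-h_n\gamma_n)$, and that this is exactly the standing assumption $0<h_n\gamma_n<1$ in \eqref{eq1}.
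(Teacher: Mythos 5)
Your proposal is correct and follows essentially the same route as the paper: induction on $n$, substitution of the bounds \eqref{eq3}--\eqref{eq4} into the inductive estimate to obtain $g_{n+1}\le \frac{1}{\mu_n}-\frac{\mu_{n+1}-\mu_n}{\mu_n^2}$, and then the observation that this is at most $\frac{1}{\mu_{n+1}}$ because the difference is $\frac{(\mu_{n+1}-\mu_n)^2}{\mu_n^2\mu_{n+1}}\ge 0$. Your explicit remark that $1-h_n\gamma_n\ge 0$ is needed to apply the inductive hypothesis is a small point the paper leaves implicit, but it does not change the argument.
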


\begin{proof}
Let us prove \eqref{eq5} by induction. Inequality \eqref{eq5} holds for $n=0$ by assumption \eqref{eq2}. 
Suppose that \eqref{eq5} holds for $n\le m$. From \eqref{eq1}, 
from the inequalities \eqref{eq3}--\eqref{eq4}, and from the induction 
hypothesis 
$g_n\le\frac{1}{\mu_n}$, $n\le m$, one gets
\begin{align*}
g_{m+1}&\le g_m(1-h_m\gamma_m) + \alpha_m h_m g_m^2 + h_m\beta_m\\
&\le \frac{1}{\mu_m}(1-h_m\gamma_m)+\frac{h_m\mu_m}{2}\bigg{(}\gamma_m -\frac{\mu_{m+1}-\mu_m}{\mu_m h_m}\bigg{)}\frac{1}{\mu_m^2}
+\frac{h_m}{2\mu_m}\bigg{(}\gamma_m -\frac{\mu_{m+1}-\mu_m}{\mu_m h_m}\bigg{)}\\
&= \frac{1}{\mu_m}-\frac{\mu_{m+1}-\mu_m}{\mu_m^2}\\
&= \frac{1}{\mu_{m+1}}- (\mu_{m+1}-\mu_m)\big{(}\frac{1}{\mu_m^2} - 
\frac{1}{\mu_m \mu_{m+1}} \big{)}\\
&= \frac{1}{\mu_{m+1}}- \frac{(\mu_{m+1}-\mu_m)^2}{\mu_n^2 \mu_{m+1}} 
\le\frac{1}{\mu_{m+1}}.
\end{align*}
Therefore, inequality \eqref{eq5} holds for $n=m+1$. 
Thus, inequality \eqref{eq5} holds for all $n\ge 0$ by induction. 
Theorem~\ref{lem1} is proved.
\end{proof}

Setting $h_n =1$ in Theorem~\ref{lem1}, one obtains the following result:
\begin{cor}
\label{cor1}
Let $\alpha,\beta,\gamma_n$ and $g_n$ be nonnegative sequences, and
\begin{equation}
\label{2eq1}
\begin{split}
g_{n+1}&\le g_n(1-\gamma_n) +\alpha_n  g_n^2+\beta_n,\qquad 0<\gamma_n <1.
\end{split}
\end{equation}
If there is a monotonically growing sequence $(\mu_n)_{n=1}^\infty>0$ such 
that the following conditions hold
\begin{align}
\label{2eq2}
g_0&\le\frac{1}{\mu_0},\\
\label{2eq3}
\alpha_n&\le\frac{\mu_n}{2}\bigg{(}\gamma_n -\frac{\mu_{n+1}-\mu_n}{\mu_n }\bigg{)},\\
\label{2eq4}
\beta_n&\le\frac{1}{2\mu_n}\bigg{(}\gamma_n -\frac{\mu_{n+1}-\mu_n}{\mu_n }\bigg{)},
\end{align}
then
\begin{equation}
\label{2eq5}
g_n\le\frac{1}{\mu_n}, \qquad \forall n\ge 0.
\end{equation}
\end{cor}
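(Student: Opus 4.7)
The plan is to observe that Corollary~\ref{cor1} is the literal specialization of Theorem~\ref{lem1} to the case $h_n\equiv 1$, so no new work is needed beyond checking that each hypothesis of the corollary matches the corresponding hypothesis of the theorem under this substitution, and then invoking the theorem's conclusion. First I would verify the recurrence: setting $h_n=1$ in \eqref{eq1} turns the inequality $g_{n+1}\le g_n(1-h_n\gamma_n)+\alpha_n h_n g_n^2+h_n\beta_n$ into exactly \eqref{2eq1}, and the constraint $0<h_n\gamma_n<1$ becomes $0<\gamma_n<1$, as required.

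Next I would verify the three structural hypotheses \eqref{2eq2}--\eqref{2eq4}. The initial condition \eqref{2eq2} is identical to \eqref{eq2}. The growth conditions \eqref{2eq3} and \eqref{2eq4} are obtained from \eqref{eq3} and \eqref{eq4} by replacing the factor $\frac{\mu_{n+1}-\mu_n}{\mu_n h_n}$ with $\frac{\mu_{n+1}-\mu_n}{\mu_n}$, which is precisely the $h_n=1$ case. The monotonicity and positivity of $(\mu_n)$ are assumed in both statements.

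Having matched all hypotheses, I would apply Theorem~\ref{lem1} directly to conclude $g_n\le 1/\mu_n$ for all $n\ge 0$, which is \eqref{2eq5}.

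Since the argument is a one-line specialization, there is essentially no obstacle; the only thing worth pointing out is that the corollary does not require $\lim_{n\to\infty}\mu_n=\infty$, so I would state the conclusion as the bound \eqref{2eq5} without appending the decay statement that appears at the end of Theorem~\ref{lem1}.
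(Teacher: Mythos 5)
Your proposal is correct and is exactly how the paper obtains the corollary: the paper introduces it with the single remark that setting $h_n=1$ in Theorem~\ref{lem1} yields the result, which is precisely the specialization you carry out and verify hypothesis by hypothesis. Your closing observation that the decay statement $\lim_{n\to\infty}g_n=0$ need not be appended (since the corollary does not assume $\lim_{n\to\infty}\mu_n=\infty$) is a reasonable and accurate refinement.
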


\section{Applications}

Let $F:H\to H$ be a twice Fr\'{e}chet differentiable map in a real Hilbert space $H$.
We also assume that
\begin{equation}
\label{eqrr2}
\sup_{u\in B(u_0,R)} \|F^{(j)}(u)\| \le M_j = M_j(R),\quad 0\le j\le 2,
\end{equation}
where $B(u_0,R):=\{u: \|u-u_0\|\le R \}$, $u_0\in H$ is some element, $R>0$, and there is no
restriction on the growth of $M_j(R)$ as $R\to\infty$, i.e., an arbitrary strong nonlinearity 
$F$ are admissible. 

Consider the equation:
\begin{equation}
\label{eqrr3}
F(v)=f,
\end{equation}
and assume that $F'(\cdot)\ge 0$, that is, $F$ is monotone: $\langle F(u)-F(v),u-v\rangle \ge 0$,
$\forall u,v\in H$, and that
\eqref{eqrr3} has a solution, possibly non-unique. Let $y$ be the unique minimal-norm solution to \eqref{eqrr3}.
If $F$ is monotone and continuous, then $\mathcal{N}_f:=\{ u:F(u)=f\}$ is a closed convex set in $H$ (\cite{R499}).
Such a set in a Hilbert space has a unique minimal-norm element. So, the 
solution $y$ is well defined.
Let $a\in C^1[0,\infty)$ be such that 
\begin{equation}
\label{eqrr5}
a(t)>0,\qquad a(t)\searrow 0\qquad \text{as}\qquad t\to\infty.
\end{equation}
Assume $h_n>0$. 
Denote
$$
a_n:=a(t_n),\quad t_n:=\sum_{j=1}^{n-1}h_j,\quad t_0=0,\quad a_n>a_{n+1}.
$$
Let $A_{a_n}:=A_n + a_n$ where $A_n:=F'(u_n)\ge 0$. 
Consider the following iterative scheme for solving \eqref{eqrr3}:
\begin{equation}
\label{eqrr4}
u_{n+1} = u_n - h_n A_{a_n}^{-1}\big{[} F(u_n) + a_n u_n - f \big{]},
\qquad u_0 = u_0,
\end{equation}
where $u_0\in H$ is arbitrary. 
The operator $A_{a_n}^{-1}$ is well defined and $\|A_{a_n}^{-1}\|\le \frac{1}{a_n}$ if $A_n \ge 0$ and $a_n>0$. 
Denote $w_n:=u_n-V_n$ where $V_n$ solves the equation
\begin{equation}
\label{eqr21}
F(V_n) + a_n V_n - f =0.
\end{equation}
One has
\begin{equation}
\label{eqrr6}
w_{n+1}  = w_n - h_nA_{a_n}^{-1}\big{[}F(u_n) + a_n u_n - a_nV_n - F(V_n) \big{]}+ V_n-V_{n+1} ,\qquad w_0:=u_0-V_0.
\end{equation}
The Taylor's formula yields: 
\begin{equation}
\label{eqrr7}
F(u_n)-F(V_n) = A_nw_n + K_n,\qquad \|K_n\| \le \frac{M_2\|w_n\|^2}{2}.
\end{equation}
Thus, \eqref{eqrr6} can be written as
\begin{equation}
\label{eqrr8}
w_{n+1} = w_n(1-h_n) - h_nA_{a_n}^{-1}A_nK_n + V_n-V_{n+1}.
\end{equation}
From \eqref{eqr21} one derives:
\begin{equation}
\label{eqr25}
F(V_{n+1}) - F(V_n) + a_{n+1}(V_{n+1}-V_n) + (a_{n+1}-a_n)V_n = 0. 
\end{equation}
Multiply \eqref{eqr25} by $V_{n+1}-V_n$ and use the monotonicity of $F$, 
to get:
\begin{equation}
\label{eeq21}
\begin{split}
a_{n+1} \|V_n-V_{n+1}\|^2 
&\le (a_{n}-a_{n+1}) \|V_{n}\| \|V_n - V_{n+1}\|.
\end{split}
\end{equation}
This implies
\begin{equation}
\label{eqz27}
\|V_n-V_{n+1}\| \le \frac{a_n - a_{n+1}}{a_{n+1}}\|V_n\| \le \frac{a_n - a_{n+1}}{a_{n+1}}\|y\|.
\end{equation}
Here we have used the fact that $\|V_n\|\le \|y\|$ (see \cite[Lemma 6.1.7]{R499}).

Let $g_n:=\|w_n\|$. 
Set $h_n=\frac{1}{2}$. Then \eqref{eqrr8} and inequality \eqref{eqz27} imply
\begin{equation}
\label{eqrr11}
g_{n+1} \le \frac{1}{2}g_n + \frac{c_1}{a_n}g_n^2 + \frac{a_n - a_{n+1}}{a_{n+1}}\|y\| ,\qquad c_1=\frac{M_2}{4},\quad g_0 = \|u_0-V_0\|.
\end{equation}
Let 
\begin{equation}
 \mu_n = \frac{\lambda}{a_n},\qquad \lambda=const>0.
\end{equation}
We want to apply Corollary \ref{cor1} to inequality \eqref{eqrr11}. We 
have
\begin{equation}
\gamma_n = \frac{1}{2},\quad
\beta_n = \frac{a_n - a_{n+1}}{a_{n+1}}\|y\|,\quad \alpha_n=\frac{c_1}{a_n},
\end{equation}
and 
$$
\frac{\mu_{n+1}-\mu_n}{\mu_n}=
\big{(}\frac{\lambda}{a_{n+1}}-\frac{\lambda}{a_n} \big{)} \frac{a_n}{\lambda} = 
\frac{a_n}{a_{n+1}} -1.
$$
Let us choose $a_0$ such that $\frac{a_{n+1}}{a_n} \ge \frac{4}{5}$. Then, 
with $\gamma_n=\frac{1}{2}$, one gets:
$$
\frac{1}{4}\le \gamma_n -\frac{\mu_{n+1}-\mu_n}{\mu_n }.
$$
Condition \eqref{2eq3} is satisfied if
$$
\frac{c_1}{a_n} \le \frac{\lambda}{2a_n}\bigg{(}\frac{1}{2} -\frac{1}{4} \bigg{)} = \frac{\lambda}{8a_n}.
$$
This inequality holds if $\lambda \ge 8c_1$. One may take $\lambda=8c_1$.

If the following inequality $\frac{1}{a_{n+1}} - \frac{1}{a_n} 
\le \frac{1}{a_0}$ holds, or equivalently,
\begin{equation}
\label{eqn30}
\frac{a_n-a_{n+1}}{a_{n+1}} \le \frac{a_n}{a_0},
\end{equation}
then condition \eqref{2eq4} holds, provided that $64c_1\|y\|\le a_0$.
This conclusion holds regardless of the previous assumptions about $a_n$.
If, for example, $a_n=\frac{4a_0}{4+n}$, then the earlier assumption
$\frac{a_{n+1}}{a_n}\ge \frac{4}{5}$ is satisfied, the inequality 
\eqref{eqn30} holds, and condition \eqref{2eq4} holds, provided that
the following inequality holds: 
%
\begin{equation}
\label{eqr31}
\|y\| \le \frac{a_0}{16 c_1}.
\end{equation}
Inequality \eqref{eqr31} holds if $16 c_1 \|y\| \le a_0$, i.e.,
if $a_0$ is sufficiently large.

Condition \eqref{2eq2} holds if $g_0 \le \frac{a_0}{\lambda}$. Thus, if
\begin{equation}
\label{eqr32}
\frac{a_{n+1}}{a_n}\ge \frac{4}{5},\quad \lambda = 8c_1,\quad  
g_0\le\frac{a_0}{8c_1}, \quad a_0\ge 16c_1\|y\|,
\end{equation}
then
$$
g_n < \frac{a_n}{\lambda},\quad \forall n\ge 0,
$$
so 
\begin{equation}
\label{fact1}
\lim_{n\to\infty} g_n =0.
\end{equation}
By the triangle inequality, one has
\begin{equation}
\label{triangle1}
\|u_n - y\| \le \|u_n - V_n\| + \|V_n - y\|.
\end{equation}
By Lemma 6.1.7 in \cite{R499}, one has
\begin{equation}
\label{eqr36}
\lim_{n\to 0} \|V_n -y\| = 0.
\end{equation}
From \eqref{fact1}--\eqref{eqr36}, one obtains
$$
\lim_{n\to\infty} \|u_n-y\| = 0.
$$
Thus, equation \eqref{eqrr3} can be solved by the iterative process \eqref{eqrr4} with $a_n$
satisfying \eqref{eqr32}.

\end{document}